\newtheorem{thm}{Theorem}
\begin{document}
	
	\title{Ellipses and Polynomial-to-Polynomial Mapping of Weighted Szeg\H{o} Projections}

		\author*[1]{\fnm{Alan R.} \sur{Legg}} \email{leggar01@pfw.edu}
	\affil*[1]{\orgdiv{Department of Mathematical Sciences}, \orgname{Purdue University Fort Wayne}, \orgaddress{ \postcode{46805}, \state{IN}, \country{USA}}}

		\abstract{We take a look at weighted Szeg\H{o} projections on ellipses and ellipsoids in light of some known results of real and complex potential theory.  We show that on planar ellipses there is a weighted Szeg\H{o} projection taking polynomials to polynomials without increasing degree.}
	
	\maketitle

	\section{Introduction}
	The goal of this note is to establish polynomial-to-polynomial mapping of certain weighted Szeg\H{o} projections on ellipses. Motivation along the way comes from the case of complex ellipsoids in several complex dimensions. Indeed in several complex variables ellipsoids are the setting of an interplay among the Szeg\H{o} projection, the Bergman projection, and harmonic functions. The Szeg\H{o} and Bergman projections coincide when acting on harmonic functions on a ball that are smooth up to the boundary of the ball. Burbea conjectured that this coincidence should characterize balls and Ligocka showed he was right \cite{L}. Even more, she showed that if any smoothly positively weighted Szeg\H{o} projection coincides with the Bergman projection on harmonic functions smooth up to the boundary of a smooth bounded domain, then the domain must be an ellipsoid of a particular form (see Theorem \ref{Ligo} below) \cite{L} . 

Ellipsoids also have a venerable place in the general history of potential theory. For an excellent exposition see Khavinson and Lundberg \cite{KL}; I would like to point out some of the ideas that will be helpful.

The first is the notion of a Fischer operator. Since an ellipsoid has a degree-two polynomial defining function $r$, and since the Laplacian $\Delta$ is a differential operator of order two, we can combine multiplications by $r$ with applications of $\Delta$ to produce operators that take polynomial inputs to polynomial outputs of the same or lesser degree. See for example \cite{S, R, R2, F}.  This seemingly innocent idea leads to a stunningly beautiful proof of the fact that on any ellipsoid the Dirichlet problem for the Laplacian posed with polynomial data on the boundary has polynomial solution of the same or lesser degree (see Theorem \ref{Ell} below) \cite{F, KL}. 

Meanwhile the Bergman projection on ellipsoids in several complex variables also takes polynomials to polynomials without increasing degree \cite{Yo}. And in the plane there is a connection to the Khavinson-Shapiro conjecture. This conjecture (from \cite{KS}) contends that ellipsoids are the \textit{only} domains on which the Dirichlet problem for the Laplacian with any polynomial data posed on the boundary has polynomial solution. Advances have been made toward this conjecture, some using Fischer operators, e.g. \cite{LR, R2, R}. For smooth bounded domains in the plane the conjecture is unaffected by swapping the Dirichlet problem solution with the Bergman projection \cite{Yo}.

I would like to bring the Szeg\H{o} projection into this circle of ideas. I will note that on complex ellipsoids in several complex variables, weighted Szeg\H{o} projections map polynomials to polynomials without increasing degree. This comes quickly from Ligocka's result and others mentioned above, but in the planar case the argument is applicable only to discs. So for planar ellipses I will use a Fischer-type operator to show that there are weighted Szeg\H{o} projections that map polynomials to polynomials without increasing degree. At the end I will pose some questions about relating the Szeg\H{o} projection to the Khavinson-Shapiro conjecture.

	\section{Background}
	\subsection{Complex Notation}
	For points $z$ in the complex plane $\mathbb{C}$ we will write $z=x+iy$ where $x,y$ are the real and imaginary parts of $z$ respectively. We will associate $\mathbb{C}$ and $\mathbb{R}^2$ in the usual way. Similarly we will associate $\mathbb{C}^n$ with $\mathbb{R}^{2n}$, and for point $z:=(z_1, z_2, \cdots, z_n) \in \mathbb{C}^n$ we will for each component $z_j$ write $z_j=x_j+iy_j$ in real and imaginary parts. For a multi-index $\alpha=(\alpha_1, \alpha_2, \cdots, \alpha_n)$ of non-negative integers we define 
	\[x^\alpha := \prod_{j=1}^n x_j^{\alpha_j}\]
	and similarly for $y^\alpha$ and for the complex-valued $z^\alpha$.

For a complex number $z \in \mathbb{C}$ we write $\bar{z}:=x-iy$ for the complex conjugate, and then $2x=z+\bar{z}$, $2iy=z-\bar{z}$ and $|z|:=x^2+y^2=z\bar{z}.$ Letting $f$ be a complex-valued function on a domain $\Omega \subseteq \mathbb{C}$ that is continuously differentiable in $x,y$, we define holomorphic differentiation $\partial/\partial z$ and antiholomorphic differentiation ${\partial}/ \partial \bar{z}$ by
\[\frac{\partial f}{\partial z} := \frac{1}{2} \; \big{(}\,\frac{\partial f}{\partial x}-i \, \frac{\partial f}{\partial y}\, \big{)} \]
\[\frac{{\partial} f}{{\partial} \bar{z}} := \frac{1}{2} \; \big{(}\, \frac{\partial f}{\partial x} + i \, \frac{\partial f}{\partial y}\, \big{)} \]
Importantly $4\partial^2/\partial z \partial \bar{z}=\Delta$, where $\Delta$ is the Laplacian. Notice also that when $f$ is real-valued $2\, \partial f/ \partial \bar{z}$ is the gradient $\nabla f$ encoded as a complex number. 

In the plane $\partial f$ and $\bar{\partial}f$ will sometimes serve as abbreviations for $\partial f/ \partial z$ and $\partial f / \partial \bar{z}$. (In several variables these symbols would connote exterior derivatives of differential forms, but we do not intend for that interpretation here.)

We say $f$ is a \textit{holomorphic function} on $\Omega$ if $\bar{\partial} f =0$ on all of $\Omega$. If $\partial f=0$ we say $f$ is \textit{antiholomorphic}. If $\Delta f=0$ we say $f$ is \textit{harmonic}. For a function in several complex variables a holomorphic function is one that is holomorphic in each complex variable separately.
	
	By a \textit{polynomial} on $\mathbb{C}^n$ we will mean a complex-valued polynomial of the real and imaginary parts. So a polynomial of degree at most $N$ on $\mathbb{C}^n$ (or $\mathbb{R}^{2n})$ will be one of the form $\sum_{|\alpha|+|\beta| \leq N}c_{\alpha,\beta} x^\alpha y^\beta$ where $c_{\alpha,\beta}$ are complex constants. Here $|\cdot|$ refers to the size of a multi-index, for example $|\alpha|:=\sum_{j=1}^n \alpha_j$.  Note that by simple arithmetic we may alternately write the polynomial above in the form
	\[\sum_{|a|+|b| \leq N} d_{a,b} z^a\bar{z}^b \]
	for (possibly different) multi-indices $a,b$ and complex constants $d_{a,b}$. By a \textit{holomorphic} polynomial we mean a polynomial of the form
	$\sum_{|\alpha| \leq N} c_\alpha z^\alpha.$ Of course a holomorphic polynomial is a holomorphic function.
	
	\subsection{The Bergman and Szeg\H{o} Projections}
	For $\Omega \subseteq \mathbb{C}^n$ a bounded domain the \textit{Bergman space} $\mathcal{A}^2(\Omega)$ is the space of holomorphic $L^2$  complex-valued functions on $\Omega$.  The \textit{Hardy space} $H^2(\partial \Omega)$ is the completion of the space of restrictions to the boundary of holomorphic functions on $\Omega$ that are continuous up to the boundary, with respect to the inner product $\langle f, g \rangle :=\int_{\partial \Omega} f\bar{g}ds$ where $ds$ is surface measure along the boundary $\partial \Omega$ . (For precise definitions see e.g. \cite{Kr} Chapter 1, and \cite{BellBook} Chapter 6.) We can define orthogonal projections to these spaces. The \textit{Bergman projection} $B$ is the orthogonal projection from $L^2(\Omega)$ to the Bergman space. The \textit{Szeg\H{o}} projection $S$ is the orthogonal projection from $L^2(\partial \Omega)$ to the Hardy space. We will view $Sf$ as a function on $\Omega$ by associating it with the holomorphic function on $\Omega$ whose boundary values are $Sf$, when this is possible. For an excellent review of all these notions in the plane see \cite{BellBook}.
	
	In this note we consider bounded $\mathcal{C}^\infty$ smooth domains and let $\mathcal{A}^\infty(\Omega)$ be the space of holomorphic functions on $\Omega$ that extend $\mathcal{C}^\infty$ smoothly to the boundary. In the planar case the smooth members of the orthogonal complement $(H^2(\partial \Omega))^\perp$ of the Hardy space are exactly the boundary functions $\overline{H(z)}\overline{T(z)}$, where $H \in \mathcal{A}^\infty(\Omega)$ and $T$ is the unit tangent to the boundary at $z$.  Also the arclength measure $ds$ on the boundary of $\Omega$ is equal to $\overline{T(z)}dz$ (\cite{BellBook}, Chapters 2,4).
	
	Let $\omega$ be a positive $\mathcal{C}^\infty$ smooth weight function on the boundary of a smooth bounded domain $\Omega$ in the plane. Let $H^{2}_\omega(\partial \Omega)$ be the correspondingly weighted Hardy space, where the inner product of functions $f,g$ on the boundary of $\Omega$ is $\langle f, g \rangle := \int_{\partial \Omega} f \bar{g} \omega ds$. The smooth members of $(H^2_\omega(\partial \Omega))^\perp$ include boundary values $\overline{H} \overline{T} \omega^{-1}$ where $H \in \mathcal{A}^\infty(\Omega).$
	
	We say a smooth real-valued function $r$ on $\mathbb{C}$ is a defining function for $\Omega$ if $\Omega=\{z: r(z)<0\}$, $\partial \Omega = \{z: r(z)=0\}$, and $\nabla r$ is nonvanishing on $\partial \Omega$.  In this case $\bar{\partial} r$ points along the outward normal at every point of $\partial \Omega$. Thus the unit tangent along the boundary is
	\[T(z)=i \, \frac{\bar{\partial} r}{| \bar{\partial} r|}. \]
	
	 \subsection{Existing Theorems}
	  Here are formal statements of three results that have already been mentioned above.
	 
	 \begin{thm} 
	 	\label{Ligo}
	 	(Ligocka \cite{L})  Let $\Omega \subseteq \mathbb{C}^n$ be a bounded $\mathcal{C}^\infty$ smooth domain and assume there is a smooth weight $\omega$ on the boundary of $\Omega$ such that $S_\omega h = Bh$ for all $h$ harmonic and smooth up to the boundary of $\Omega$, where $B$ is the Bergman projection and $S_\omega$ is the weighted Szeg\H{o} projection. Then $\Omega$ must be an ellipsoid as follows: for some $d>0$, positive symmetric $n \times n$ matrix $[a_{ij}]$ and point $(\zeta_1, \zeta_2, \cdots, \zeta_n) \in \mathbb{C}^n$,
	 	\[\Omega =\{(z_1,z_2, \cdots, z_n) \in \mathbb{C}^n: \quad \sum_{i,j=1}^n a_{ij}(z_i-\zeta_i)(\bar{z}_j-\bar{\zeta}_j) < d\}.\] 
\end{thm}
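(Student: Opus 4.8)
\medskip
\noindent\emph{A plan for proving Theorem~\ref{Ligo}.}
The idea is to convert the operator equation $S_\omega h = Bh$ into a rigid statement of real potential theory about $\Omega$ and then invoke the characterization of ellipsoids by harmonic quadrature. (It is worth keeping the target in mind: the ellipsoids in the conclusion are exactly the complex-affine images of balls, on which --- unweighted --- the Szeg\H{o} and Bergman projections already agree on harmonic functions, by the fact recalled in the introduction; so the conclusion is sharp, and in the plane it leaves only discs.) First I would unwind the hypothesis. For holomorphic $h$ both projections act as the identity, so the content lies in the non-holomorphic part of $h$. Since $Bh \in \mathcal{A}^\infty(\Omega) \subseteq H^2_\omega(\partial\Omega)$, the equation $S_\omega h = Bh$ is equivalent to $h - Bh$ lying in $(H^2_\omega(\partial\Omega))^\perp$, and passing to reproducing kernels this becomes the pointwise identity
\[ \int_{\Omega} K_\Omega(z,w)\, h(w)\, dV(w) \;=\; \int_{\partial\Omega} S_\omega(z,w)\, h(w)\, \omega(w)\, ds(w), \qquad z \in \Omega, \]
valid for every $h$ harmonic and smooth on $\bar\Omega$, where $K_\Omega$ is the Bergman kernel and $S_\omega(z,w)$ the weighted Szeg\H{o} kernel. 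In words: for each fixed $z$, the solid density $K_\Omega(z,\cdot)\,dV$ on $\Omega$ and the boundary density $S_\omega(z,\cdot)\,\omega\,ds$ on $\partial\Omega$ cannot be distinguished by functions harmonic near $\bar\Omega$. In several variables this first reduction is more delicate, since $(H^2_\omega(\partial\Omega))^\perp$ has no description as clean as the planar $\overline{G}\,\overline{T}\,\omega^{-1}$ recalled above and must be handled through $\bar{\partial}$ on $\partial\Omega$ together with the defining function $r$.

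Next I would feed the identity a spanning family of harmonic functions. Testing against translates $E(\,\cdot - \zeta\,)$ of the fundamental solution of $\Delta$ on $\mathbb{R}^{2n}$, with pole $\zeta$ ranging over the complement of $\bar\Omega$, shows that the Newtonian (logarithmic, when $n=1$) potential of the solid density $K_\Omega(z,\cdot)$ on $\Omega$ coincides, throughout $\mathbb{R}^{2n}\setminus\bar\Omega$, with the single-layer potential of the boundary density $S_\omega(z,\cdot)\,\omega$. Testing against harmonic polynomials, and separately differentiating the identity in the base point $z$ and matching Taylor coefficients, turns it into an over-determined system of moment equations relating the Bergman kernel of $\Omega$, the weight $\omega$, and the geometry of $\partial\Omega$, with the low-degree harmonics carrying the decisive quadratic data. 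This is precisely the setting classified by the classical theorems on domains whose Newtonian potentials of natural densities are polynomial (Dive, Nikliborc; see Khavinson--Lundberg~\cite{KL}), and working those equations should force $\partial\Omega$ to be a real quadric hypersurface --- bounded, smooth, connected --- with, moreover, no purely holomorphic (hence no purely anti-holomorphic) quadratic terms; it is this last feature that rules out the ellipsoids which are not complex-affine images of balls (in the plane, all non-circular ellipses).

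It then remains only to normalize. Definiteness of the quadratic form comes from boundedness and smoothness, and diagonalizing together with a translation produces a center $\zeta \in \mathbb{C}^n$, a positive symmetric matrix $[a_{ij}]$, and a constant $d > 0$ so that $\Omega = \{\,z : \sum_{i,j} a_{ij}(z_i-\zeta_i)(\bar z_j-\bar\zeta_j) < d\,\}$, as claimed.

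The main obstacle I expect is the second step. The weighted Szeg\H{o} kernel $S_\omega(z,w)$ is only implicitly defined, so one must first obtain enough control on the boundary density to run any potential-theoretic argument at all; and since the classical ellipsoid theorems concern a \emph{uniform} solid density, the version needed here --- with the non-constant Bergman-kernel density on one side, the weighted Szeg\H{o} density on the other, and this for every base point $z$ simultaneously --- has to be established, not merely quoted. Pinning down exactly how those two pieces of data are forced to be potential-theoretically compatible, and extracting from that the vanishing of the holomorphic quadratic term, is the crux.
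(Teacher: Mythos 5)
This is a result the paper quotes from Ligocka \cite{L} as background; no proof of it appears in the paper, so there is no in-house argument to compare against. Judged on its own terms, your proposal is an outline rather than a proof, and the decisive steps are missing. Your first reduction is fine: since the hypothesis forces $Bh=S_\omega h\in H^2_\omega(\partial\Omega)$, the condition is indeed equivalent to the kernel identity $\int_\Omega K_\Omega(z,w)h(w)\,dV(w)=\int_{\partial\Omega}S_\omega(z,w)h(w)\omega(w)\,ds(w)$ for all harmonic $h$ smooth on $\bar\Omega$, and testing against fundamental solutions with pole off $\bar\Omega$ legitimately identifies the exterior potentials of the two densities. The gap is everything after that. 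The Dive--Nikliborc circle of theorems you invoke classifies domains whose Newtonian potential of the \emph{uniform} density is a quadratic polynomial inside; here the solid density $K_\Omega(z,\cdot)$ is non-constant, complex-valued, and depends on the base point $z$, and the boundary density $S_\omega(z,\cdot)\,\omega$ is entirely unknown, so those theorems do not apply and no substitute is supplied. You flag this yourself, but the flagged ``crux'' is essentially the whole theorem.

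More seriously, nothing in the outline can produce the specifically \emph{Hermitian} form of the quadric --- the absence of $z_iz_j$ and $\bar z_i\bar z_j$ terms --- which is the distinctive content of Ligocka's result. Real potential theory of the type you propose is blind to that distinction: every real ellipsoid (in the paper's terminology) satisfies the polynomial-potential and polynomial-Dirichlet properties (Theorem~\ref{Ell}), yet only the complex-affine images of balls appear in the conclusion, and in the plane only discs. Whatever mechanism rules out eccentric ellipses must exploit the complex structure --- the interplay of the Bergman kernel, the boundary measure, and the defining function under Stokes'-type identities, which is how Ligocka's argument pins down $\bar\partial r$ and hence the Hermitian quadratic --- and that mechanism is absent from your plan. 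As written, the proposal reduces the theorem to a harder, unproved potential-theoretic claim rather than proving it.
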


\textbf{Remark.} Ligocka also shows what the weight $\omega$ must be and deduces that the conclusion of the theorem characterizes ellipsoids of the stated form.

Considering $[a_{ij}]$ diagonal note that in several variables the so-called \textit{complex ellipsoids}, defined by $\sum_{j} a_{jj}|z_j-\zeta_j|^2 < d$, are represented. However,  we may also wonder about the {\textit{real ellipsoids}} \{$z \in \mathbb{C}^n: \;\sum_{j=1}^n (a_jx_j^2+b_j y_j^2) < d \}$ with $d>0$ and each $a_j,b_j>0$.

Not all real ellipsoids are included in Ligocka's result. You can convince yourself by restricting to the complex plane $n=1$. In that case the theorem just offers us discs. A result about the Bergman projection that does apply to all ellipsoids in $\mathbb{C}^n$ is the following.

\begin{thm}
	\label{Yothm}
	 (See \cite{Yo}) Let $\Omega$ be any ellipsoid in $\mathbb{C}^n$ (even any real ellipsoid in the terminology introduced above). Let $B$ be the Bergman projection on $\Omega$, and define $P_N$ as the space of all complex-valued polynomials (not necessarily holomorphic) of degree at most $N$. Let $HP_N$ be the space of all holomorphic polynomials of degree at most $N$. Then $B(P_N)=HP_N$.
\end{thm}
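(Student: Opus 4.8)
The plan is to recover the action of $B$ from an explicit Fischer‑type decomposition of polynomials adapted to the degree‑two defining function of the ellipsoid. One inclusion is free: every member of $HP_N$ lies in $P_N$ and is fixed by $B$, so $HP_N\subseteq B(P_N)$. For the reverse inclusion $B(P_N)\subseteq HP_N$, recall that $Bp$ is characterized by $p-Bp\perp\mathcal A^2(\Omega)$, so it is enough to exhibit, for each $p\in P_N$, a holomorphic polynomial $h$ of degree at most $N$ with $p-h\perp\mathcal A^2(\Omega)$. The orthogonality will be produced from one observation: if $r$ is a degree‑two polynomial defining $\Omega$ and $\phi$ is any polynomial, then $\partial(r\phi)/\partial z_j\perp\mathcal A^2(\Omega)$ for each $j$. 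Indeed, for $q\in\mathcal A^2(\Omega)$ — first for $q$ smooth up to $\partial\Omega$, then in general by approximating $q$ with dilates — integration by parts rewrites $\int_\Omega \big(\partial(r\phi)/\partial z_j\big)\,\bar q\,dV$ as a boundary integral carrying $r\phi$ as a factor, minus $\int_\Omega r\phi\,(\partial\bar q/\partial z_j)\,dV$; the boundary term vanishes because $r\phi=0$ on $\partial\Omega$, and the interior term vanishes because $\partial\bar q/\partial z_j=\overline{\partial q/\partial\bar z_j}=0$ for holomorphic $q$.

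In the planar case $n=1$ this reduces the theorem to linear algebra. Consider the linear map
\[\mathcal F\colon HP_N\oplus P_{N-1}\longrightarrow P_N,\qquad \mathcal F(h,\phi)=h+\tfrac{\partial}{\partial z}(r\phi).\]
It does take values in $P_N$: $r$ has degree exactly two, so $r\phi$ has degree at most $N+1$, and $\partial/\partial z$ lowers degree by one. A dimension count gives $\dim(HP_N\oplus P_{N-1})=(N+1)+\binom{N+1}{2}=\binom{N+2}{2}=\dim P_N$, so it suffices to prove $\mathcal F$ injective; surjectivity then follows, and writing a given $p\in P_N$ as $h+\partial(r\phi)/\partial z$ yields $p-h\perp\mathcal A^2(\Omega)$ while $h\in HP_N\subseteq\mathcal A^2(\Omega)$, hence $Bp=h\in HP_N$.

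The injectivity is where the only genuine content of the planar argument sits. Suppose $h+\partial(r\phi)/\partial z=0$, i.e. $\partial(r\phi)/\partial z=-h$. Apply $\partial/\partial\bar z$: since $h$ is holomorphic, $\partial h/\partial\bar z=0$, and because $4\,\partial^2/\partial z\,\partial\bar z=\Delta$ we get $\Delta(r\phi)=0$, so $r\phi$ is a harmonic polynomial on $\Omega$. But $r\phi$ vanishes on $\partial\Omega$ (as $r$ does), $\Omega$ is bounded, and $r\phi$ is continuous on $\overline\Omega$, so the maximum principle forces $r\phi\equiv 0$ on $\Omega$; a polynomial vanishing on an open set is the zero polynomial, so $\phi=0$ (since $r\not\equiv 0$) and then $h=0$. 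Thus $\mathcal F$ is injective, completing the plane. This step is where the ellipticity of $\Delta$, through the maximum principle, interacts with $r$ having degree two.

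For a general real ellipsoid in $\mathbb C^n$ the same input shows $Bp=h$ whenever $p=h+\sum_{j=1}^n\partial(r\phi_j)/\partial z_j$ with $h\in HP_N$ and each $\phi_j\in P_{N-1}$, so the theorem reduces to surjectivity of $(h,\phi_1,\dots,\phi_n)\mapsto h+\sum_j\partial(r\phi_j)/\partial z_j$ onto $P_N$. The difficulty — and the place where real work is needed — is that for $n\ge 2$ this map has a nontrivial kernel (the $\phi_j$ are not independent), so the tidy dimension count of the planar case is no longer available and surjectivity must be argued directly. I would treat this as a Fischer decomposition for the operator $\phi\mapsto\sum_j(\partial/\partial z_j)(r\phi)$: pass to top‑degree homogeneous components, where $r$ is replaced by its leading positive‑definite quadratic form, use the apolar (Fischer) pairing to identify the relevant orthogonal complement in each degree, and descend in degree by induction — in the same spirit as the polynomial Dirichlet problem of Theorem \ref{Ell}. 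This several‑variables surjectivity is the main obstacle, and it is the portion supplied by the cited reference.
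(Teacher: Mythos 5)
The paper offers no proof of Theorem \ref{Yothm} at all; it is imported verbatim from \cite{Yo}, so there is no internal argument to measure yours against and I can only assess the proposal directly. The planar half of your argument is complete and correct: the orthogonality $\partial(r\phi)/\partial z\perp\mathcal{A}^2(\Omega)$ by integration by parts (with the dilation approximation legitimate because an ellipse is star-shaped), the count $\dim(HP_N\oplus P_{N-1})=(N+1)+\binom{N+1}{2}=\binom{N+2}{2}=\dim P_N$, and the injectivity step --- $\partial(r\phi)/\partial z$ holomorphic forces $\Delta(r\phi)=0$, so $r\phi$ is a (complex-valued, hence componentwise) harmonic polynomial vanishing on $\partial\Omega$ and therefore identically zero --- are all sound, and $Bp=h$ then follows from $p-h\perp\mathcal{A}^2(\Omega)$ with $h\in HP_N\subseteq\mathcal{A}^2(\Omega)$. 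It is also pleasantly close in spirit to the Fischer-operator scheme the paper itself uses for the weighted Szeg\H{o} projection on ellipses: an injective map between spaces of equal finite dimension, hence surjective, only run in the Bergman inner product rather than the weighted boundary pairing.

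The genuine gap is the case $n\ge 2$. The theorem is asserted for every (real) ellipsoid in $\mathbb{C}^n$, and the paper genuinely needs that generality: its theorem on complex ellipsoids invokes Theorem \ref{Yothm} in $\mathbb{C}^n$, not just in the plane. You correctly diagnose why the planar shortcut dies there --- the map $(h,\phi_1,\dots,\phi_n)\mapsto h+\sum_j\partial(r\phi_j)/\partial z_j$ acquires a large kernel, so injectivity plus a dimension count can no longer deliver surjectivity --- but the replacement you offer (homogenize, use the apolar pairing, induct on degree) is a plan rather than a proof, and surjectivity onto $P_N$ is the entire content of the theorem in that setting. As written, the proposal proves the statement for $n=1$ and defers precisely the hard part for $n\ge 2$ back to the reference it was meant to replace. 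If you only need the planar case (as in the paper's own new result on ellipses), your argument suffices and is self-contained; to claim Theorem \ref{Yothm} in full you must actually execute the several-variable Fischer-type surjectivity argument, in the style of the induction on degree used to prove Theorem \ref{Ell}, rather than gesture at it.
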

Recall that on a smooth bounded domain $\Omega$, given any $f$ continuous up to the boundary of $\Omega$, we can find a harmonic function that is also continuous up to the boundary, with the same boundary values as $f$. We call that harmonic function $\mathbb{E}f$, the \textit{Dirichlet solution} or {\textit{harmonic extension}} of the boundary values of $f$. We call $\mathbb{E}$ the harmonic extension operator or the Dirichlet solution operator.
\begin{thm}(Well-Known, see e.g. \cite{KL}) 
	\label{Ell}
	Let $\Omega \subseteq \mathbb{R}^n$ be any ellipsoid. Let $\mathbb{E}$ be the Dirichlet problem solution operator on $\Omega$. Let $P_N$ be the space of real-valued polynomials on $\mathbb{R}^n$ of degree at most $N$, and let $\mathcal{H}_N$ be the space of harmonic real-valued polynomials on $\mathbb{R}^n$ of degree at most $N$. Then $\mathbb{E} (P_N)=\mathcal{H}_N.$
	\end{thm}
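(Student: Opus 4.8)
The plan is to construct the harmonic extension explicitly by means of a Fischer-type operator, exploiting the fact that an ellipsoid admits a defining polynomial of degree exactly $2$, the same as the order of $\Delta$. I would begin by fixing a real defining polynomial $r$ for $\Omega$ with $\deg r = 2$, so that $\Omega = \{x \in \mathbb{R}^n : r(x) < 0\}$ and $\partial\Omega = \{r = 0\}$; for the argument no normalization of $r$ is needed, only that $r$ is a genuine sign-changing defining function of degree $2$. The central object is the linear operator $T \colon P_{N-2} \to P_{N-2}$ defined by $T(q) = \Delta(rq)$. It does map $P_{N-2}$ into itself, since $\deg(rq) \leq N$ and $\Delta$ sends polynomials of degree at most $N$ to polynomials of degree at most $N-2$. (For $N \leq 1$ the theorem is trivial, as then $P_N = \mathcal{H}_N$, so I assume $N \geq 2$.)

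The heart of the matter is that $T$ is injective, hence --- being an endomorphism of the finite-dimensional space $P_{N-2}$ --- bijective. Suppose $T(q) = 0$. Then $rq$ is a harmonic polynomial vanishing on $\partial\Omega$ (because $r$ does), so by the maximum principle $rq \equiv 0$ on $\overline{\Omega}$; a polynomial vanishing on the nonempty open set $\Omega$ is the zero polynomial, and since $r \not\equiv 0$ and the polynomial ring is an integral domain, $q \equiv 0$. This is the step carrying the real content, and it is precisely here that the degree-$2$ hypothesis is used: for a general smooth domain there is no polynomial defining function of bounded degree at all, and the Khavinson-Shapiro conjecture in fact asserts that ellipsoids are the only domains for which the conclusion of the theorem holds. (For the ball one could instead invoke the classical Fischer inner product, under which multiplication by $|x|^2$ is the adjoint of $\Delta$; the maximum-principle argument has the merit of handling every ellipsoid uniformly.)

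With $T$ bijective, the proof concludes quickly. Given $p \in P_N$, I would note that $\Delta p \in P_{N-2}$ and use surjectivity of $T$ to choose $q \in P_{N-2}$ with $\Delta(rq) = -\Delta p$. Then $h := p + rq$ has degree at most $N$ and $\Delta h = \Delta p + \Delta(rq) = 0$, while $h = p$ on $\partial\Omega$ because $r$ vanishes there. By uniqueness of the Dirichlet solution, $\mathbb{E}p = h \in \mathcal{H}_N$, so $\mathbb{E}(P_N) \subseteq \mathcal{H}_N$. The reverse inclusion requires nothing: any $h \in \mathcal{H}_N$ is its own harmonic extension, so $h = \mathbb{E}h \in \mathbb{E}(P_N)$. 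Together these give $\mathbb{E}(P_N) = \mathcal{H}_N$.
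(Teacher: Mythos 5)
Your proof is correct and is precisely the classical Fischer-operator argument that the paper invokes for this well-known result: it cites \cite{KL, F} rather than reproving it, but the introduction describes exactly your scheme of combining multiplication by the degree-two defining polynomial $r$ with $\Delta$ to get an injective (hence bijective) endomorphism of $P_{N-2}$, and then writing $\mathbb{E}p = p + rq$. No gaps; the injectivity step via the maximum principle and the trivial reverse inclusion are both handled properly.
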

\section{Results}
\subsection{Complex ellipsoids}
We are going to make a connection to Ligocka's result in complex space, so we will use $\mathbb{E}$ to denote the harmonic extension operator for complex-valued functions on domains in $\mathbb{C}^n$. Combining theorems above we get the following attractive result.

\begin{thm} Let $\Omega \subseteq \mathbb{C}^n$ be an ellipsoid as in the conclusion of Theorem \ref{Ligo} above. Let $P_N$ be the space of complex-valued (not necessarily holomorphic) polynomials on $\mathbb{C}^n$ of degree at most $N$, and let $HP_N$ be the space of holomorphic such polynomials of degree at most $N$.Then for some smooth positive weight $\omega$ on the boundary of $\Omega,$ letting $S_\omega$ be the correspondingly weighted Szeg\H{o} projection,  we have
	\[S_\omega (P_N)=HP_N. \]
\end{thm}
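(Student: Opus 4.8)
The plan is to chain together the three cited theorems. The key observation is that on an ellipsoid $\Omega$ of the form appearing in Theorem \ref{Ligo}, there is a smooth positive weight $\omega$ (the one Ligocka identifies in her remark) for which $S_\omega h = Bh$ for every $h$ harmonic and smooth up to the boundary. So first I would fix that weight $\omega$. Then, given $p \in P_N$, I would form its harmonic extension $\mathbb{E}p$, which by Theorem \ref{Ell} (applied in the real-variable picture on $\mathbb{R}^{2n}$, extended by linearity to complex-valued polynomials) is again a polynomial of degree at most $N$, and it is harmonic and smooth up to the boundary. Since $p$ and $\mathbb{E}p$ agree on $\partial\Omega$, we have $S_\omega p = S_\omega(\mathbb{E}p)$ as the Szeg\H{o} projection only sees boundary values. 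By Ligocka's coincidence, $S_\omega(\mathbb{E}p) = B(\mathbb{E}p)$.

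Now $\mathbb{E}p$ is a complex-valued polynomial of degree at most $N$, so it lies in $P_N$, and Theorem \ref{Yothm} gives $B(\mathbb{E}p) \in HP_N$. Stringing these equalities together yields $S_\omega p \in HP_N$, so $S_\omega(P_N) \subseteq HP_N$. For the reverse inclusion, note that any holomorphic polynomial $q$ is itself holomorphic and smooth up to the boundary, hence already in the weighted Hardy space, so $S_\omega q = q$; thus $HP_N \subseteq S_\omega(P_N)$. Combining gives $S_\omega(P_N) = HP_N$.

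A couple of small points need care. One is that Theorems \ref{Yothm} and \ref{Ell} are stated slightly differently from what I need: Theorem \ref{Ell} is phrased for real-valued polynomials, but a complex-valued polynomial of degree at most $N$ is a complex linear combination of real-valued ones of degree at most $N$, and $\mathbb{E}$ is linear, so $\mathbb{E}(P_N) \subseteq P_N$ still holds (in fact equals the harmonic polynomials of degree $\le N$). I should also make sure the identification of $S_\omega f$ with a holomorphic function on $\Omega$ is legitimate here — but $B(\mathbb{E}p)$ is literally a holomorphic polynomial, hence manifestly has a holomorphic representative on $\Omega$, so there is no subtlety. The other point is purely bookkeeping: Ligocka's theorem as quoted runs in the direction ``coincidence $\Rightarrow$ ellipsoid,'' whereas I need the converse direction for the specific ellipsoids in her conclusion; this is exactly the content of the Remark following Theorem \ref{Ligo}, which says the stated form of $\Omega$ together with Ligocka's prescribed weight does produce the coincidence, so I will simply cite that.

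The main obstacle, such as it is, is not in the logic but in confirming that the weight $\omega$ furnished by Ligocka's analysis is indeed smooth and positive on $\partial\Omega$ and is the same weight throughout the argument — i.e. that the ``$S_\omega h = Bh$ on harmonics'' hypothesis can actually be met by some admissible $\omega$ for every ellipsoid of the given form, not just abstractly asserted. Granting that (which is precisely what the Remark asserts), the proof is a short three-line composition of known results, and indeed this is why the introduction describes it as coming ``quickly.''
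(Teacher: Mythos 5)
Your proof is correct and follows essentially the same route as the paper's: fix Ligocka's weight, write $S_\omega p = S_\omega(\mathbb{E}p) = B(\mathbb{E}p)$, and apply Theorems \ref{Ell} and \ref{Yothm} to land in $HP_N$. Your added remarks on the reverse inclusion and on extending Theorem \ref{Ell} to complex-valued polynomials by linearity are sensible refinements of details the paper leaves implicit.
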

{\textbf{Remark.}} Essentially this says that on any \textit{complex} ellipsoid a weighted Szeg\H{o} projection maps polynomials to polynomials without increasing degree. Nothing is claimed about general \textit{real} ellipsoids.
\begin{proof} 
	Since polynomials are smooth up to the boundary of $\Omega$, Theorem \ref{Ligo} gives us for some weight $\omega$ 
	\[S_\omega p =S_\omega \mathbb{E}p= B \, \mathbb{E} p \]
	for any $p \in P_N$, where $\mathbb{E}$ is the harmonic extension operator and $B$ is the Bergman projection. But by Theorems \ref{Yothm} and \ref{Ell}, both $B$ and $\mathbb{E}$ map polynomials to polynomials without increasing degree. Hence $S_\omega p \in HP_N$. 
	\end{proof}
This gets us thinking about ellipses in the plane. Recall that Ligocka's theorem when applied in the plane touches only on discs. The theorem we just proved says nothing about eccentric planar ellipses. 
\subsection{Ellipses in the Plane}
 For planar ellipses we can get the result through a different path. There is remarkably a Fischer-type operator that makes everything fall beautifully into place.
\begin{thm} Let $\Omega \subseteq \mathbb{C}$ be any ellipse, let $P_N$ be the space of complex-valued (not necessarily holomorphic) polynomials of degree at most $N$ in the plane, and $HP_N$ the space of holomorphic polynomials of degree at most $N$. Then for some smooth positive weight $\omega$ on the boundary of $\Omega$, we have
	\[S_\omega (P_N) = HP_N \]
	where $S_\omega$ is the weighted Szeg\H{o} projection.  
\end{thm}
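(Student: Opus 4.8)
The plan is to reduce the statement to a single polynomial identity governed by a Fischer-type operator attached to the degree-two defining function of the ellipse. First I would normalize. A rigid motion of the plane preserves the degree of a polynomial, carries holomorphic polynomials to holomorphic polynomials, and conjugates the weighted Szeg\H{o} projection of one domain to that of its image; so we may assume $\Omega=\{z\in\mathbb{C}:r(z)<0\}$ with
\[
r(z)=A(z^{2}+\bar z^{2})+Bz\bar z-1,\qquad B>2|A|\ge 0 ,
\]
where the inequality is exactly the condition that $\Omega$ be a bounded ellipse. If $A=0$ then $\Omega$ is a disc and the conclusion is classical (and the argument below adapts verbatim with $r_{2}=Bz\bar z$), so the essential case is $A\ne 0$. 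I then take the weight $\omega:=|\bar\partial r|^{-1}$ on $\partial\Omega$, which is positive and $\mathcal{C}^{\infty}$ there because $\nabla r$ does not vanish on $\partial\Omega$. Since $T=i\,\bar\partial r/|\bar\partial r|$ and $\overline{\bar\partial r}=\partial r$ for real $r$, one gets on $\partial\Omega$ that $\overline{T}\,\omega^{-1}=-i\,\partial r=-i\,\ell$, where $\ell:=2Az+B\bar z$ is a polynomial of degree one in $z,\bar z$. By the description of $(H^{2}_{\omega}(\partial\Omega))^{\perp}$ recalled above, for every $H\in\mathcal{A}^{\infty}(\Omega)$ the boundary function $\overline{H}\cdot(-i\ell)=\overline{H}\,\overline{T}\,\omega^{-1}$ lies in $(H^{2}_{\omega}(\partial\Omega))^{\perp}$.

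Next I would reduce the theorem to the surjectivity of a Fischer-type map. Since $HP_{N}\subseteq\mathcal{A}^{\infty}(\Omega)\subseteq H^{2}_{\omega}(\partial\Omega)$, the inclusion $HP_{N}\subseteq S_{\omega}(P_{N})$ is immediate, so only $S_{\omega}(P_{N})\subseteq HP_{N}$ requires proof. It suffices to show that every $p\in P_{N}$ admits a polynomial identity
\[
p=h-i\,\overline{H}\,\ell+r\,Q ,
\]
with $h\in HP_{N}$, $H$ a holomorphic polynomial of degree at most $N-1$, and $Q$ a polynomial of degree at most $N-2$; for then, restricting to $\partial\Omega=\{r=0\}$, we get $p=h+\overline{H}\,\overline{T}\,\omega^{-1}$ on $\partial\Omega$, which exhibits $p-h\in(H^{2}_{\omega}(\partial\Omega))^{\perp}$ with $h\in H^{2}_{\omega}(\partial\Omega)$, hence $S_{\omega}p=h\in HP_{N}$. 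Treating $z$ and $\bar z$ as independent variables, this is exactly the statement that the linear map
\[
\Phi:HP_{N}\oplus\overline{HP_{N-1}}\oplus P_{N-2}\longrightarrow P_{N},\qquad \Phi(h,\overline{H},Q)=h-i\,\overline{H}\,\ell+r\,Q ,
\]
is onto. A dimension count gives $\dim HP_{N}+\dim\overline{HP_{N-1}}+\dim P_{N-2}=(N+1)+N+\tfrac{N(N-1)}{2}=\tfrac{(N+1)(N+2)}{2}=\dim P_{N}$, so $\Phi$ is onto if and only if $\Phi$ is injective.

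The injectivity of $\Phi$ is the heart of the matter, and I expect it to be the main obstacle, since this is the step in which the rigid quadratic structure of the ellipse is used. I would prove it by induction on $N$, peeling off top homogeneous parts. Suppose $h-i\,\overline{H}\,\ell+rQ=0$, and let $cz^{N}$, $c'\bar z^{N-1}$, $Q_{N-2}$ be the top homogeneous parts of $h$, $\overline{H}$, $Q$. The degree-$N$ homogeneous component of the identity reads
\[
cz^{N}-i\,c'\,\bar z^{N-1}\ell+Q_{N-2}\,r_{2}=0 ,
\]
where $r_{2}=A(z^{2}+\bar z^{2})+Bz\bar z$ is the principal part of $r$. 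Over $\mathbb{C}$ one factors $r_{2}=A(z-\lambda_{1}\bar z)(z-\lambda_{2}\bar z)$, where $\lambda_{1},\lambda_{2}$ are the roots of $At^{2}+Bt+A=0$; because $A\ne 0$ and $B>2|A|$ these are real and distinct, they satisfy $\lambda_{1}\lambda_{2}=1$, and one checks $2A\lambda_{1}+B=A(\lambda_{1}-\lambda_{2})=-(2A\lambda_{2}+B)$. Specializing $z\mapsto\lambda_{j}\bar z$ kills the $r_{2}$ term and yields a $2\times 2$ linear system in $(c,c')$ whose determinant is a nonzero multiple of $\lambda_{1}^{N}+\lambda_{2}^{N}$; since $\lambda_{1}\lambda_{2}=1$ with $\lambda_{1},\lambda_{2}$ real, this sum never vanishes, so $c=c'=0$, and then $Q_{N-2}r_{2}=0$ forces $Q_{N-2}=0$. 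Hence all top parts vanish, and the identity now has the same shape with $N$ replaced by $N-1$; the case $N=0$ being trivial, induction gives $h=\overline{H}=Q=0$. This proves $\Phi$ injective, hence onto, hence $S_{\omega}(P_{N})=HP_{N}$.
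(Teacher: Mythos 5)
Your proof is correct, and although it shares the paper's overall skeleton --- the same weight $\omega=|\bar\partial r|^{-1}$, the same use of boundary functions of the form $\overline{H}\,\overline{T}\,\omega^{-1}$ as smooth members of $(H^2_\omega(\partial\Omega))^\perp$, and the same ``injective between spaces of equal dimension, hence surjective'' endgame --- the mechanism producing the orthogonal decomposition is genuinely different. The paper manufactures the antiholomorphic factor via the harmonic extension operator, studying the Fischer-type operator $p\mapsto\partial r\,\bar\partial\,\mathbb{E}p$ on $P_N$; that argument leans on Theorem \ref{Ell} both to see that the operator preserves $P_N$ and to identify its kernel, and on the maximum principle to show the kernel coincides with its own preimage, after which the map descends to a bijection of $P_N/(HP_N\oplus V_N)$. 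You instead treat the antiholomorphic factor as a free unknown and show directly that $\Phi(h,\overline{H},Q)=h-i\overline{H}\ell+rQ$ is a bijection of $HP_N\oplus\overline{HP_{N-1}}\oplus P_{N-2}$ onto $P_N$, proving injectivity by stripping top homogeneous parts and specializing along the two characteristic directions $z=\lambda_j\bar z$ of the principal part $r_2$; your dimension count, the determinant $iA(\lambda_1-\lambda_2)(\lambda_1^m+\lambda_2^m)$, and the observation that $\lambda_1\lambda_2=1$ with both roots real forces $\lambda_1^m+\lambda_2^m\neq0$ all check out. What your route buys is self-containedness: no potential theory is used --- neither the polynomial Dirichlet property of ellipses nor the maximum principle --- only linear algebra and the factorization of the quadratic form; the price is the preliminary normalization by a rigid motion and a separate word for the disc, where $A=0$ and the factorization degenerates (which you correctly flag). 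The two resulting decompositions, $f=h+v+\partial r\,\bar\partial\mathbb{E}p$ in the paper versus $p=h-i\overline{H}\ell+rQ$ in yours, agree once one identifies $V_N$ with $r\cdot P_{N-2}$.
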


\begin{proof}
	Since $\Omega$ is an ellipse there are real numbers $h,k$ and positive real numbers $a,b$ such that the degree-two polynomial
	\[r(x,y)=\frac{(x-h)^2}{a^2}+\frac{(y-k)^2}{b^2}-1 \]
is a defining function for $\Omega$.  Define the weight $\omega$ on the boundary of $\Omega$ to be 
	\[\omega:=\frac{1}{|\bar{\partial} r|}. \]
	 Then for each $H \in \mathcal{A}^\infty(\Omega)$,  we see that the restriction to the boundary of $\overline{H} \cdot \partial r$  is a member of $(H^2_\omega(\partial \Omega))^\perp$. Consider the operator
	\[\partial r \bar{\partial} \mathbb{E}p \]
	acting on $P_N$. Since $r$ is a polynomial of degree $2$ and owing to Theorem \ref{Ell} we have
	\[\partial r \bar{\partial} \mathbb{E}: P_N \to P_N. \]
	But notice also that given any $p \in P_N$, $\mathbb{E}p$ is harmonic and so $\bar{\partial} \mathbb{E}p$ is an antiholomorphic polynomial, meaning
	\[\partial r \bar{\partial} \mathbb{E}p= ( \,\bar{\partial} \mathbb{E}p\,) \cdot \partial r \in (H_\omega^2(\partial \Omega))^\perp. \]
	From here we want to view our operator as a vector space isomorphism. To do that we need quotient spaces and we begin by finding the kernel. It is obvious that if $p$ is holomorphic or if $p$ is uniformly zero on the boundary then $\partial r \bar{\partial} \mathbb{E}p=0$. 
	
	Now assume that for some $p \in P_N$, $\partial r \bar{\partial} \mathbb{E}p=0$. This implies that $r \bar{\partial} \mathbb{E} p$ is an antiholomorphic function that vanishes on the boundary. By the maximum principle (and since $r<0$ inside $\Omega$) we conclude that $\bar{\partial} \mathbb{E} p=0.$ That means $\mathbb{E}p$ is holomorphic, which in turn means that $p$ has the same boundary values as a holomorphic polynomial (cf. Theorem \ref{Ell}). We conclude that $p=h+v$, for some $h \in HP_N$ and $v$ a polynomial that vanishes uniformly on the boundary. 
	
	Letting $V_N$ be the space of complex-valued polynomials of degree at most $N$ that vanish uniformly on the boundary of $\Omega$, we have shown that 
	\[\ker \partial r \bar{\partial} \mathbb{E}=HP_N \oplus V_N \]
	
	Now we determine the preimage of this kernel. Suppose that for a polynomial $p$, $\partial r \bar{\partial} \mathbb{E}p \in HP_N \oplus V_N$. Then $\partial r \bar{\partial} \mathbb{E}p$ has the same boundary values as a holomorphic polynomial. But above we showed that on the boundary $\partial r \bar{\partial} \mathbb{E}p \in (H_\omega^{2}(\partial \Omega))^\perp.$ We conclude that $\partial r \bar{\partial} \mathbb{E}p$ vanishes uniformly on $\partial \Omega$. Thus $\partial r \cdot \bar{\partial} \mathbb{E} p$ vanishes on the boundary and in fact $\bar{\partial} \mathbb{E}p$ does as well (since $\nabla r$ is nonvanishing there). But since $\mathbb{E}p$ is harmonic, so is $\bar{\partial} \mathbb{E} p$. Being a harmonic function vanishing everywhere on the boundary, $\bar{\partial} \mathbb{E} p=0$ throughout $\Omega$. Hence $\mathbb{E}p$ is holomorphic, so $p$ has the same boundary values as a holomorphic polynomial and $p \in HP_N \oplus V_N$. This discussion reveals that $\partial r \bar{\partial} \mathbb{E}$ descends to a linear one-to-one map $\varphi$ 
	\[\varphi: P_N/(HP_N \oplus V_N)  \to P_N/(HP_N \oplus V_N), \]
	via $\varphi [p] := [\partial r \bar{\partial} \mathbb{E}p]$.
	Since the domain and range spaces have the same dimension, the fact that $\varphi$ is one-to-one implies that $\varphi$ is also onto. 

The surjectivity of $\varphi$ is exactly what we need. Given any polynomial $f \in P_N$, let $p \in P_N$ be a polynomial such that $\varphi[p]=[f]$. Then for some $h \in HP_N$ and $v \in V_N$ we have $f=\partial r \bar{\partial} \mathbb{E}p +h + v$. Hence restricted to boundary we have the orthogonal decomposition
\[f=h+\partial r \bar{\partial} \mathbb{E}p\]
so that $S_\omega f=h \in HP_N$ and the theorem is proved.
\end{proof}

\section{Unweighted Szeg\H{o} Projection}

In light of the last section we might wonder when the $unweighted$ Szeg\H{o} projection can map polynomials to polynomials. On a disc it is evident that this happens. It seems unlikely that it should happen on any other smooth bounded domain. If for some bounded smooth domain $\Omega$ the Szeg\H{o} projection takes polynomials to polynomials without increasing degree, we can rotate and translate to make a simplification, and obtain a quite rigid result for one particular case. These curiosities will be taken as evidence to formulate some questions.

So assume that for a smooth bounded planar domain $\Omega$ the (unweighted) Szeg\H{o} projection $S$ takes polynomials to polynomials without increasing degree. Then $S\bar{z}=az+b$ for some constants $a,b$.  Assuming $|a| \neq 1$ let $V$ be the image of $\Omega$ under the biholomorphic mapping
\[f(z):=e^{-i\cdot \arg(a)/2}(z-\frac{\bar{a}b+\bar{b}}{1-|a|^2}).\]
According to the biholomorphic transformation formula for the Szeg\H{o} projection (\cite{BellBook} Chapter 12) the Szeg\H{o} projection on $V$ has the property that 
\[S \bar{z}=|a|z. \]
Interestingly, direct computation shows that this is impossible on an (eccentric) ellipse. Aside from that I do not pursue the case $a \neq 0$ any further, but if the Szeg\H{o} projection of $\bar{z}$ is constant the situation is clearer.
\begin{thm}
Let $\Omega \subseteq \mathbb{C}$ be a bounded finitely-connected domain with real analytic boundary and let $S$ be its Szeg\H{o} projection. Then $\Omega$ is a disc if and only if $S \bar{z}$ is constant.
\end{thm}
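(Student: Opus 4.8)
The forward implication is a quick computation: on a disc of radius $\rho$ centred at $a$, parametrise $\partial\Omega$ by $z=a+\rho e^{i\theta}$, so $\bar z=\bar a+\rho e^{-i\theta}$; since $H^{2}(\partial\Omega)$ is the $L^{2}(\partial\Omega,ds)$-closure of the span of $\{(z-a)^{n}=\rho^{n}e^{in\theta}:n\ge 0\}$ and $ds=\rho\,d\theta$, the mode $e^{-i\theta}$ is orthogonal to $H^{2}$, so $S\bar z=\bar a$ is constant. For the converse, suppose $S\bar z\equiv c$, i.e.\ $\bar z-c\perp H^{2}(\partial\Omega)$. Because $\langle\bar z,g\rangle=\overline{\int_{\partial\Omega}z\,g\,ds}$, this says $\int_{\partial\Omega}z\,g(z)\,ds=c\int_{\partial\Omega}g\,ds$ for every $g\in H^{2}(\partial\Omega)$; taking $g\equiv 1$ identifies $c$ as the conjugate of the arclength centroid of $\partial\Omega$, so after translating $\Omega$ (which shifts $c$ by a constant and preserves the hypothesis) I may assume $c=0$, i.e.\ $\int_{\partial\Omega}z\,g(z)\,ds=0$ for all $g\in H^{2}(\partial\Omega)$. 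The plan is to mine this single relation until $\Omega$ is forced to be a disc.

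First I would feed in the test functions $g(z)=1/(z-\zeta)$, which lie in $\mathcal A^{\infty}(\Omega)\subseteq H^{2}(\partial\Omega)$ for every $\zeta\notin\overline\Omega$. Rewriting $\int_{\partial\Omega}\frac{z}{z-\zeta}\,ds=0$ as $\ell+\zeta\int_{\partial\Omega}\frac{ds_{z}}{z-\zeta}=0$, with $\ell:=\int_{\partial\Omega}ds$, yields
\[
F(\zeta):=\int_{\partial\Omega}\frac{ds_{z}}{z-\zeta}=-\frac{\ell}{\zeta},\qquad \zeta\notin\overline\Omega .
\]
Now $F$ is holomorphic on all of $\mathbb C\setminus\partial\Omega$, while $-\ell/\zeta$ has a pole at the origin; since $F$ equals $-\ell/\zeta$ on the unbounded complementary component of $\Omega$ and on each of its holes, the origin lies in none of these, and a short estimate of the singular integral near a real-analytic boundary point excludes $0\in\partial\Omega$, so $0\in\Omega$. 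Next, writing $ds=\overline{T(z)}\,dz$, the integral $F$ is $2\pi i$ times the Cauchy transform of $\overline{T}$, so the Sokhotski--Plemelj jump relation gives, on $\partial\Omega$,
\[
\overline{T(\zeta)}=\frac{1}{2\pi i}\Big(F_{\mathrm{int}}(\zeta)+\frac{\ell}{\zeta}\Big)=:\Psi(\zeta),
\]
where $F_{\mathrm{int}}=F|_{\Omega}$, holomorphic on $\Omega$ and continuous up to $\partial\Omega$. Thus $\overline{T}$ extends from $\partial\Omega$ to $\Psi$, holomorphic on $\Omega\setminus\{0\}$, continuous up to $\partial\Omega$, with a simple pole at $0$ of nonzero residue $\ell/(2\pi i)$.

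Then I would invoke the Schwarz function: real analyticity of $\partial\Omega$ furnishes $\Phi$, holomorphic in a two-sided neighbourhood of $\partial\Omega$ with $\bar\zeta=\Phi(\zeta)$ there, and differentiating this along the boundary together with $T\overline{T}=1$ yields $\overline{T}^{2}=\Phi'$ on $\partial\Omega$. Hence $\Phi'=\Psi^{2}$ on $\partial\Omega$, and the reflection principle shows $\Phi'$ and $\Psi^{2}$ agree near $\partial\Omega$, so $\Phi'$ extends to $\Psi^{2}$ on $\Omega$; tracking periods around $0$ and around the holes — each vanishes since $\Phi=\bar\zeta$ is single-valued on every boundary curve — shows $\Phi$ is meromorphic on $\Omega$ with a single singularity, a simple pole at $0$ of residue $\rho^{2}:=\ell^{2}/(4\pi^{2})>0$. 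Write $\Phi(\zeta)=\rho^{2}/\zeta+h(\zeta)$ with $h$ holomorphic on $\Omega$ and continuous on $\overline\Omega$. On $\partial\Omega$ the Schwarz relation becomes $\zeta h(\zeta)=|\zeta|^{2}-\rho^{2}\in\mathbb R$, so the holomorphic function $\zeta h(\zeta)$ has vanishing (harmonic) imaginary part on $\partial\Omega$, hence on $\Omega$, hence is a real constant; it vanishes at $0\in\Omega$, so $h\equiv 0$. Therefore $|\zeta|^{2}=\rho^{2}$ on $\partial\Omega$; as $\Omega$ is bounded, connected, open, contains $0$, and has its boundary on the circle $|\zeta|=\rho$, the connectedness of that circle's interior and exterior forces $\Omega=\{|\zeta|<\rho\}$. (Alternatively, $\oint_{\partial\Omega}\bar\zeta\,d\zeta=2i\,\mathrm{Area}(\Omega)$ and $\oint_{\partial\Omega}\Phi\,d\zeta=2\pi i\rho^{2}$ give $\ell^{2}=4\pi\,\mathrm{Area}(\Omega)$, and the equality case of the isoperimetric inequality finishes it.)

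The step I expect to fight with is the upgrade from a boundary identity to a genuine meromorphic extension into $\Omega$: one must check that the Cauchy transform of $\overline{T}$ has boundary values regular enough for the Plemelj formula, that $\Psi^{2}$ really agrees with the a priori only locally defined $\Phi'$ throughout a one-sided neighbourhood of $\partial\Omega$, and then run the homological bookkeeping that kills the logarithmic period and pins the pole at the origin — all of which lean on real analyticity of $\partial\Omega$. Once that extension is secured, the rest is soft.
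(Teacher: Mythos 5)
Your argument is correct, but it reaches the conclusion by a genuinely different route from the paper's. The paper exploits the structure of $(H^2(\partial\Omega))^\perp$: since $\bar z-b$ is a smooth element of the orthogonal complement, it equals $\overline{H}\,\overline{T}$ on the boundary for some $H\in\mathcal A^\infty(\Omega)$, which hands over the meromorphic extension $\overline{T}=H(z)/(z-\bar b)$ at once; a Cauchy-theorem argument then forces the pole into $\Omega$ and yields $\int_{\partial\Omega}f\,ds=2\pi i\,H(\bar b)f(\bar b)$ for all $f\in\mathcal A^\infty(\Omega)$, i.e.\ $\Omega$ is a one-point arclength quadrature domain, at which point the paper cites \cite{BGS} and \cite{G} to conclude it is a disc. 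You instead test the orthogonality only against $1$ and the Cauchy kernels $1/(z-\zeta)$ with $\zeta\notin\overline{\Omega}$, compute the Cauchy transform of $\overline{T}$ explicitly, and recover the same one-pole meromorphic extension of $\overline{T}$ via the Plemelj jump formula; you then replace the quadrature-domain citation with a self-contained Schwarz-function argument ($\Phi'=\overline{T}^{\,2}$ on the boundary, vanishing of all periods, and the classical observation that $\zeta h(\zeta)$ is holomorphic and real on $\partial\Omega$, hence constant). What your route buys is self-containedness --- you in effect reprove the one-point arclength quadrature characterization of discs rather than quoting it --- and it uses strictly less of the hypothesis (orthogonality to Cauchy kernels and constants only); the cost is the heavier boundary-regularity bookkeeping you flag yourself (Plemelj for H\"older densities on real-analytic curves, and the boundary uniqueness theorem needed to propagate $\Phi'=\Psi^2$ from $\partial\Omega$ into a collar), all of which is standard under the real-analyticity hypothesis. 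Two cosmetic points: the displayed identity should read $\int_{\partial\Omega}zg\,ds=\bar c\int_{\partial\Omega}g\,ds$ (a stray conjugate, harmless once you normalize $c=0$), and the final step deserves one sentence noting that $\partial\Omega\subseteq\{|\zeta|=\rho\}$ together with $0\in\Omega$ and boundedness pins down $\Omega=\{|\zeta|<\rho\}$ by connectedness of the two complementary components of the circle.
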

\begin{proof}
That $S \bar{z}$ is constant for a disc can be readily computed.

Conversely, if $S \bar{z}=b$ for $b$ a constant, then there is $H \in A^\infty(\Omega)$ such that on the boundary
\[\bar{z}-b=\overline{H(z)} \overline{T(z)}.\]
This reveals that $H$ extends real analytically to the boundary, so $H$ extends holomorphically to a neighborhood of every point on the boundary (\cite{BellBook} Theorem 11.2 and comments after). By rearranging algebraically and noting that $T(z)=1/\overline{T}(z)$ we have
on the boundary
\[\overline{T(z)}=\frac{H(z)}{z-\bar{b}}.\] 
If $H$ were to vanish at $\bar{b}$ with $\bar{b}$ in the closure of $\Omega$, then by the extension noted above and the Riemann removable singularity theorem $H(z)/(z-\bar{b})$ would be holomorphic on $\Omega$ and continuous up to the boundary. Similarly, $H(z)/(z-\bar{b})$ would be holomorphic and continuous up to the boundary if $\bar{b}$ were outside the closure of $\Omega$. In either case by Cauchy's Theorem we would have
\[\int_{\partial \Omega} 1 ds= \int_{\partial \Omega} T(z) \overline{T}(z) ds= \int_{\partial \Omega} \bar{T}(z) dz= \]
\[\int_{\partial \Omega} \frac{H(z)}{z-b} =0. \]
In other words the arclength of the boundary is $0$, which is not tenable. Thus $\bar{b} \in \Omega$ and $H(z)/(z-\bar{b})$ has a simple pole at $\bar{b}$. Manipulating as we just did, replace $1$ by any $f \in \mathcal{A}^\infty(\Omega)$ to see that
\[\int_{\partial \Omega} f(z)ds= \int_{\partial \Omega} \frac{f(z)H(z)}{z-\bar{b}}dz= \]
\[2\pi i H(\bar{b}) \cdot f(\bar{b}) \]
by the Cauchy integral formula. But $H$ is independent of the choice of $f$ so we have shown that $\Omega$ is a one-point arclength quadrature domain (cf. \cite{BellBook} Chapter 23). Thus $\Omega$ is a disc (\cite{BGS}, \cite{G} Example 3.2).
\end{proof}
\textbf{Remark.} The smoothness and topological requirements have by no means been optimized here. In fact when $\overline{T}$ extends meromorphically throughout a domain, the domain must be an arclength quadrature domain under much more generous conditions \cite{BGS, G}. Being a one-point arclength quadrature domain can also be expressed in terms of the Szeg\H{o} \textit{kernel}. This viewpoint also leads to quite rigid results for disc-like behavior, even with minimal regularity assumptions \cite{DTZ}.

\section{Some questions}

The aspects of the Szeg\H{o} and weighted Szeg\H{o} projections we have touched on are reminiscent of the Khavinson-Shapiro conjecture. It would be interesting if in the plane the Szeg\H{o} projection were as closely related to it as the Bergman projection.  

\textbf{Question 1.} If the unweighted Szeg\H{o} projection of a bounded domain in $\mathbb{C}$ maps polynomials to polynomials, must the domain be a disc?

\textbf{Question 2.} If for a bounded planar domain with smooth boundary there exists a positive smooth boundary weight so that the weighted Szeg\H{o} projection takes polynomials to polynomials, must the domain be an ellipse?

\textbf{Question 3.} In several complex variables, characterize the domains on which a weighted Szeg\H{o} projection maps polynomials to polynomials.

\textbf{Question 4.} If the Bergman projection or Dirichlet solution operator takes polynomials to polynomials, must some weighted Szeg\H{o} projection do so as well, \textit{a priori}?

\bibliography{Szego}

\end{document}